\numberwithin{equation}{section}
\theoremstyle{plain}
\newtheorem{theorem} {Theorem} [section]
\newtheorem{lemma} [theorem] {Lemma}
\theoremstyle{definition}
\newcommand \g {\mathfrak{g}}
\renewcommand \a {\mathfrak{a}}
\newcommand \n {\mathfrak{n}}
\newcommand \GL {\operatorname{GL}}
\newcommand \z {\mathfrak{z}}
\newcommand \h {\mathfrak{h}}
\newcommand \C {\mathbb{C}}
\newcommand \R {\mathbb{R}}
\begin{document}
\title{Degenerations of 8-dimensional 2-step nilpotent Lie algebras}
\author{Mar\'ia Alejandra Alvarez}
\address{Departamento de Matem\'aticas - Facultad de Ciencias B\'asicas - Universidad de Antofagasta}
\email{maria.alvarez@uantof.cl}

\thanks{}

\subjclass[2010]{17B30, 17B99}

\keywords{Nilpotent Lie algebras, Variety of Lie algebras, Degenerations}

\maketitle

\begin{abstract}
In this work, we consider degenerations between 8-dimensional 2-step nilpotent Lie algebras over $\C$ and obtain the geometric classification of the variety $\mathcal{N}^2_8$.
\end{abstract}

\section{Preliminaries}

The algebraic classification of Lie algebras is a {\it wild problem}. Lie algebras are classified up to dimension 6 (see for instance \cite{SW} for a list of indecomposable Lie algebras of dimension $\leq 6$). In the class of nilpotent Lie algebras, there are classifications up to dimension 7 (see for instance \cite{G} or \cite{Ma}). In dimension 8 there are only  classifications of 2-step nilpotent and filiform Lie algebras (see \cite{ZS} and \cite{fili8} respectively). A related problem is the one concerning the geometric classification of Lie algebras, their degenerations, rigid elements and irreducible components. Regarding this problem in the variety of Lie algebras we can mention \cite{CD}, \cite{KN}, \cite{GO}, \cite{S6}, \cite{CPSW}, \cite{W}, \cite{BS}, \cite{B1}, \cite{L}, \cite{B2}, \cite{HNPT}, \cite{NP}, \cite{A1}. Moreover, the study of the geometric classification for varieties of different structures is an active research field, several results have been obtained recently in different directions regarding nilpotent algebras (see for instance \cite{ACK}, \cite{AH}, \cite{AH2}, \cite{FKKV}, \cite{GKK}, \cite{GKKS}, \cite{KKK}, \cite{KPGV}, \cite{KPPV}, \cite{KPV}).

\medskip

In this work we obtain degenerations between 2-step nilpotent Lie algebras of dimension 8 over $\C$ and provide the irreducible components of the variety $\mathcal{N}_8^2$, which turn out to be the orbit closures of three rigid Lie algebras.

\subsection{The variety of Lie algebras}

Let $V$ be a complex $n$-dimensional vector space with a fixed basis
$\left\{e_1,\dots,e_n\right\}$, and let $\g=(V,[\cdot,\cdot])$ be a Lie algebra with underlying vector space $V$ and Lie product $[\cdot,\cdot]$. The set of Lie algebra structures on the space $V$ is an algebraic variety in $\C^{n^3}$ in the following sense: Every Lie algebra structure on $V$, $\g$, can be identified with its set of structure cons\-tants $\left\{c_{ij}^k\right\}\in\C^{n^3}$, where $\displaystyle[e_i,e_j]=\sum_{k=1}^nc_{ij}^ke_k$. This set of structure constants satisfies the polynomial equations given by the skew-symmetry and the Jacobi identity, i.e. $c_{i,j}^k+c_{j,i}^k=0$  and $\displaystyle\sum_{l=1}^n\left(c_{j,k}^lc_{i,l}^r+c_{k,i}^lc_{j,l}^r+c_{i,j}^lc_{k,l}^r\right)=0$. We will denote by $\mathcal{L}_n$ the algebraic variety of Lie algebras of fixed dimension $n$. The group $G=\GL(n,\C)$ acts on $\mathcal{L}_n$ via change of basis: \[g\cdot[X,Y]=g\left([g^{-1}X,g^{-1}Y]\right),\quad X,Y\in\g,\ g\in \GL(n,\C).\]
Also, one can define the Zariski topology on $\mathcal{L}_n$

\medskip

Given two Lie algebras $\g$ and $\h$, we say that {\bf $\g$ degenerates to $\h$}, and denoted by $\g\to\h$, if $\h$ lies in the Zariski closure of the $G$-orbit $O(\g)$. An element $\g\in\mathcal{L}_n$ is called {\bf rigid}, if its orbit $O(\g)$ is open in $\mathcal{L}_n$. Since each orbit $O(\g)$ is a constructible set, its closures relative to the Euclidean and the Zariski topologies are the same (see \cite{M}, 1.10 Corollary 1, p. 84). As a consequence the following is obtained:

\begin{lemma}
Let $\C(t)$ be the field of fractions of the polynomial ring $\C[t]$. If there
exists an operator $g_t\in\GL(n,\C(t))$ such that $\displaystyle\lim_{t\to 0}g_t\cdot\g=\h$, then $\g\to\h$.
\end{lemma}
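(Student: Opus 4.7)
The plan is to exhibit $\h$ as a limit of points that all sit inside the single orbit $O(\g)$, and then invoke the fact (already recalled in the excerpt from \cite{M}) that for the constructible set $O(\g)$ the Euclidean and Zariski closures coincide.

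First I would examine the family $g_t \cdot \g$ more carefully. Writing $g_t$ as a matrix whose entries are elements of $\C(t)$, the determinant $\det(g_t)$ is a nonzero element of $\C(t)$, and the entries of both $g_t$ and $g_t^{-1}$ are rational functions of $t$. Each such rational function has only finitely many poles and zeros in $\C$, so there is a finite set $S \subset \C$ such that for every $t_0 \in \C \setminus S$, the specialization $g_{t_0} \in \GL(n, \C)$ is well-defined. For each such $t_0$, the Lie algebra structure $g_{t_0} \cdot \g$ is an honest element of $O(\g) \subset \mathcal{L}_n$.

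Next, regarding the assumed limit as a statement about the structure constants: when we view $g_t \cdot \g$ as a tuple of structure constants in $\C(t)^{n^3}$, the hypothesis $\lim_{t \to 0} g_t \cdot \g = \h$ means that each coordinate, a priori a rational function in $t$, has a finite limit at $t = 0$ equal to the corresponding structure constant of $\h$. In particular, specializing $t$ to values in $\C \setminus S$ arbitrarily close to $0$ (in the Euclidean topology), the points $g_t \cdot \g \in O(\g)$ converge to $\h$ in $\C^{n^3}$. Hence $\h$ belongs to the Euclidean closure of $O(\g)$.

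Finally, since $O(\g)$ is the image of the morphism $G \to \mathcal{L}_n$, $g \mapsto g \cdot \g$, it is a constructible subset of $\mathcal{L}_n$. By the corollary from \cite{M} quoted just above the statement, the Euclidean and Zariski closures of a constructible set coincide, so $\h \in \overline{O(\g)}$ in the Zariski topology, which is by definition $\g \to \h$. The only genuinely nontrivial ingredient is the coincidence of the two closures, and that is imported as a black box; the rest is just checking that a one-parameter rational family in $\GL(n, \C(t))$ really produces a curve of specialized points in $O(\g)$ accumulating at $\h$.
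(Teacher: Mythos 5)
Your argument is correct and is exactly the route the paper intends: it states the lemma as an immediate consequence of the fact that the Euclidean and Zariski closures of the constructible set $O(\g)$ coincide, which is the same black box you invoke from \cite{M}. The only detail you add—specializing $g_t$ at parameter values outside the finite set of poles to produce genuine points of $O(\g)$ converging to $\h$—is precisely the routine verification the paper leaves implicit, so there is nothing to correct.
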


\medskip

For example, every $n$-dimensional Lie algebra degenerates to the abelian one. The operator defined by $g_t(e_k)=t^{-1}e_k$ for $1\leq k\leq n$, gives us
\[\lim_{t\to0}g_t\cdot[e_i,e_j]=\lim_{t\to0}g_t\left([g_t^{-1}(e_i),g_t^{-1}(e_j)]\right)=\lim_{t\to0}g_t\left([te_i,te_j]\right)=\lim_{t\to0}t^2\sum_{k=1}^n c_{i,j}^kg_t(e_k)=\lim_{t\to0}t\left(\sum_{k=1}^n c_{i,j}^ke_k\right)=0.\]

\subsection{The variety of 2-step nilpotent Lie algebras}

The variety $\mathcal{N}^2_n$, is the closed subset of $\mathcal{L}_n$ given by all at most 2-step nilpotent Lie algebras, i.e. those that satisfying $[\g,[\g,\g]]=0$.

\medskip

The geometric classification of the varieties $\mathcal{N}^2_n$ for $n\leq 7$ can been seen from \cite{A1}, and now can be recuperated from this work.

\medskip

Here we consider the classification of indecomposable 8-dimensional 2-step nilpotent Lie algebras over $\C$ obtained by Zaili and Shaoqiang in \cite{ZS}; the classification of indecomposable nilpotent Lie algebras of dimension 7 given by Gong in \cite{G}, and the classification of Lie algebras of dimension $\leq 6$ from \cite{SW}.

\begin{theorem}
The isomorphism classes of Lie algebras in $\mathcal{N}^2_8$ are:
\begin{center}

\begin{spacing}{1.2}
\begin{longtable}{|c|l|c|}
\caption[]{Lie algebras in $\mathcal{N}^2_8$}
\label{Table1}\\
\hline
$\g$ & \multicolumn{1}{|c|}{Lie Product} & $\dim O(\g)$\\
\hline
\endfirsthead
\caption[]{(continued)}\\
\hline
$\g$ & \multicolumn{1}{|c|}{Lie Product} & $\dim O(\g)$\\
\hline
\endhead
$N_1^{8,2}$ & $[x_1,x_2]=x_7$, $[x_3,x_4]=x_8$, $[x_5,x_6]=x_7+x_8$ & $42$ \\ \hline
$N_2^{8,2}$ & $[x_1,x_2]=x_7$, $[x_4,x_5]=x_7$, $[x_1,x_3]=x_8$, $[x_4,x_6]=x_8$ & $38$\\ \hline
$N_3^{8,2}$ & $[x_1,x_2]=x_7$, $[x_4,x_5]=x_7$, $[x_3,x_4]=x_8$, $[x_5,x_6]=x_8$ & $41$\\ \hline
$N_4^{8,2}$ & $[x_1,x_2]=x_7$, $[x_3,x_4]=x_7$, $[x_5,x_6]=x_7$, $[x_4,x_5]=x_8$ & $36$\\ \hline
$N_5^{8,2}$ & $[x_1,x_2]=x_7$, $[x_3,x_4]=x_7$, $[x_5,x_6]=x_7$, $[x_2,x_3]=x_8$, $[x_4,x_5]=x_8$ & $40$ \\ \hline
$N_1^{8,3}$ & $[x_1,x_2]=x_6$, $[x_4,x_5]=x_6$, $[x_2,x_3]=x_7$, $[x_1,x_3]=x_8$ & $39$ \\ \hline
$N_2^{8,3}$ & $[x_1,x_2]=x_6$, $[x_4,x_5]=x_6$, $[x_2,x_3]=x_7$, $[x_3,x_4]=x_8$ & $38$ \\ \hline
$N_3^{8,3}$ & $[x_1,x_2]=x_6$, $[x_2,x_3]=x_7$, $[x_4,x_5]=x_7$, $[x_3,x_4]=x_8$ & $41$ \\ \hline
$N_4^{8,3}$ & $[x_1,x_2]=x_6$, $[x_2,x_3]=x_7$, $[x_4,x_5]=x_7$, $[x_3,x_4]=x_8$, $[x_5,x_1]=x_8$ & $44$ \\ \hline
$N_5^{8,3}$ & $[x_1,x_2]=x_6$, $[x_1,x_3]=x_7$, $[x_1,x_4]=x_8$, $[x_2,x_5]=x_7$ & $37$ \\ \hline
$N_6^{8,3}$ & $[x_1,x_2]=x_6$, $[x_1,x_3]=x_7$, $[x_1,x_4]=x_8$, $[x_2,x_3]=x_8$, $[x_4,x_5]=x_7$ & $42$ \\ \hline
$N_7^{8,3}$ & $[x_1,x_2]=x_6$, $[x_1,x_3]=x_7$, $[x_1,x_5]=x_8$, $[x_2,x_4]=x_8$, $[x_3,x_4]=x_6$ & $40$ \\ \hline
$N_8^{8,3}$ & $[x_1,x_2]=x_6$, $[x_1,x_3]=x_7$, $[x_2,x_3]=x_8$, $[x_1,x_4]=x_8$, $[x_2,x_5]=x_7$ & $38$ \\ \hline
$N_9^{8,3}$ & $[x_1,x_2]=x_6$, $[x_1,x_3]=x_7$, $[x_2,x_3]=x_8$, $[x_1,x_4]=x_8$, $[x_2,x_5]=x_7$, $[x_4,x_5]=x_6$ & $45$ \\ \hline
$N_{10}^{8,3}$ & $[x_1,x_2]=x_6$, $[x_2,x_3]=x_7$, $[x_3,x_4]=x_7$, $[x_4,x_5]=x_8$ & $42$ \\ \hline
$N_{11}^{8,3}$ & $[x_1,x_2]=x_6$, $[x_2,x_3]=x_7$, $[x_3,x_4]=x_8$, $[x_4,x_5]=x_7$, $[x_5,x_1]=x_7$ & $43$ \\ \hline
$N_1^{8,4}$ & $[x_1,x_2]=x_5$, $[x_2,x_3]=x_6$, $[x_3,x_4]=x_7$, $[x_4,x_1]=x_8$ & $40$ \\ \hline
$N_2^{8,4}$ & $[x_1,x_2]=x_5$, $[x_1,x_3]=x_6$, $[x_2,x_3]=x_7$, $[x_1,x_4]=x_8$ & $37$ \\ \hline
$N_3^{8,4}$ & $[x_1,x_2]=x_5$, $[x_1,x_3]=x_6$, $[x_2,x_4]=x_6$, $[x_2,x_3]=x_7$, $[x_1,x_4]=x_8$ & $39$\\ \hline\hline
$(17)$ & $[x_1,x_2]=x_7$, $[x_3,x_4]=x_7$, $[x_5,x_6]=x_7$ & $28$ \\ \hline
$(27A)$ & $[x_1,x_2]=x_6$, $[x_1,x_4]=x_7$, $[x_3,x_5]=x_7$ & $35$\\ \hline
$(27B)$ & $[x_1,x_2]=x_6$, $[x_1,x_5]=x_7$, $[x_3,x_4]=x_6$, $[x_2,x_3]=x_7$ & $37$ \\ \hline
$(37A)$ & $[x_1,x_2]=x_5$, $[x_2,x_3]=x_6$, $[x_2,x_4]=x_7$ & $31$\\ \hline
$(37B)$ & $[x_1,x_2]=x_5$, $[x_2,x_3]=x_6$, $[x_3,x_4]=x_7$ & $36$ \\ \hline
$(37C)$ & $[x_1,x_2]=x_5$, $[x_2,x_3]=x_6$, $[x_2,x_4]=x_7$, $[x_3,x_4]=x_5$ & $34$\\ \hline
$(37D)$ & $[x_1,x_2]=x_5$, $[x_1,x_3]=x_6$, $[x_2,x_4]=x_7$, $[x_3,x_4]=x_5$ & $37$ \\ \hline\hline
$\n_{6,1}$ & $[x_4,x_5]=x_2$, $[x_4,x_6]=x_3$, $[x_5,x_6]=x_1$ & $30$ \\ \hline
$\n_{6,2}$ & $[x_3,x_6]=x_1$, $[x_5,x_4]=x_1$, $[x_4,x_6]=x_2$ & $31$\\ \hline\hline
$\n_{5,1}\oplus\n_{3,1}$ & $[x_3,x_5]=x_1$, $[x_4,x_5]=x_2$, $[x_7,x_8]=x_6$ & $38$\\ \hline
$\n_{5,1}$ & $[x_3,x_5]=x_1$, $[x_4,x_5]=x_2$ & $27$\\ \hline
$\n_{5,3}\oplus\n_{3,1}$ & $[x_2,x_4]=x_1$, $[x_3,x_5]=x_1$, $[x_7,x_8]=x_6$ & $37$\\ \hline 
$\n_{5,3}$ & $[x_2,x_4]=x_1$, $[x_3,x_5]=x_1$ & $25$\\ \hline\hline
$\n_{3,1}\oplus\n_{3,1}$ & $[x_2,x_3]=x_1$, $[x_5,x_6]=x_4$ & $32$\\ \hline
$\n_{3,1}$ & $[x_2,x_3]=x_1$ & $18$ \\ \hline\hline
$\C^8$ & $[\cdot,\cdot]=0$ & $0$\\ \hline
\hline

\end{longtable}
\end{spacing}
\end{center}
\end{theorem}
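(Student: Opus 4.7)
The plan is to handle the classification in two halves—indecomposable and decomposable algebras—and then to fill in the orbit-dimension column by a routine stabilizer computation.

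For the indecomposable 8-dimensional 2-step nilpotent Lie algebras I simply invoke the classification of Zaili and Shaoqiang \cite{ZS}, which supplies precisely the $19$ algebras $N^{8,j}_i$ in the first block of the table, grouped by $j=\dim[\g,\g]$. For the decomposable case I use the standard fact that every finite-dimensional Lie algebra over $\C$ admits an indecomposable direct-sum decomposition that is unique up to order and isomorphism, together with the observation that a direct summand of a 2-step nilpotent algebra is again 2-step nilpotent. Hence every decomposable $\g\in\mathcal{N}^2_8$ is a direct sum of indecomposable 2-step nilpotent Lie algebras of total dimension $8$ and individual dimensions at most $7$. Filtering \cite{SW} (for dimensions $\leq 6$) and \cite{G} (for dimension $7$) by the condition $[\g,[\g,\g]]=0$ yields the building blocks: the abelian $\C$ in dimension $1$; $\n_{3,1}$ in dimension $3$; $\n_{5,1},\n_{5,3}$ in dimension $5$; $\n_{6,1},\n_{6,2}$ in dimension $6$; and $(17),(27A),(27B),(37A),(37B),(37C),(37D)$ in dimension $7$ (in Gong's labelling). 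Enumerating unordered partitions $8=n_1+\cdots+n_r$ with $r\geq 2$ and taking all direct sums of these building blocks with the prescribed summand dimensions produces exactly the remaining rows of the table.

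For the orbit dimensions I apply the orbit--stabilizer formula. The stabilizer of $\g$ under the $\GL(8,\C)$-action on $\mathcal{N}^2_8$ is the algebraic group $\mathrm{Aut}(\g)$, whose Lie algebra is $\mathrm{Der}(\g)$, so
\[\dim O(\g)=64-\dim\mathrm{Der}(\g).\]
It therefore suffices to compute $\dim\mathrm{Der}(\g)$ for each algebra, which reduces to solving the linear system $D[x_i,x_j]=[Dx_i,x_j]+[x_i,Dx_j]$ in the $64$ matrix entries of $D$ against the structure constants displayed in the table. For a direct sum $\g_1\oplus\g_2$ the bookkeeping is shortened by the standard identity expressing $\mathrm{Der}(\g_1\oplus\g_2)$ in terms of $\mathrm{Der}(\g_i)$ and $\Hom$-spaces into the centres $\z(\g_i)$.

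The main obstacle is organizational rather than conceptual: each derivation computation is routine, but there are more than thirty of them and any arithmetic slip would propagate into the degeneration arguments of the rest of the paper. As a safeguard I would tabulate, for every algebra in the list, the cheap invariants $\dim[\g,\g]$ and $\dim\z(\g)$ together with the rank data of the pencil of skew forms $\{\,\ad x\mid x\in\g\,\}$ on $\g/\z(\g)$; these simultaneously distinguish the rows of the table from one another and cross-check each $\dim\mathrm{Der}(\g)$ against the corresponding data for the indecomposable summands already recorded in \cite{SW} and \cite{G}.
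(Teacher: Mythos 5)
Your proposal takes essentially the same route as the paper, which justifies the table by combining the classification of indecomposable $8$-dimensional $2$-step nilpotent Lie algebras from \cite{ZS} with the lower-dimensional classifications of \cite{G} and \cite{SW} to assemble the decomposable summands, the orbit dimensions being the standard stabilizer computation $\dim O(\g)=64-\dim\mathrm{Der}(\g)$. Your added cross-checks (invariants distinguishing the rows, the derivation formula for direct sums) are sensible safeguards but do not change the argument.
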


\section{Invariants for Lie algebras and non-degenerations}

\medskip

The techniques used in this work are the usual for obtaining non-degenerations arguments. There are several invariants for the orbit closure of a Lie algebra that have been successfully applied in previous works. Among them, we mention the ones that we use in this work.

\medskip

\begin{lemma}\label{lemma:rel}
Let $\g,\h\in\mathcal{L}_n$. If $\g\to\h$, then the following relations must hold:
\begin{enumerate}[(a)]
\item\label{inv:orb} $\dim\ O(\g)>\dim\ O(\h)$.
\item\label{inv:center} $\dim\ \z(\g)\leq\dim\ \z(\h)$, where $\z(\g)$ is the center of $\g$.
\item\label{inv:der} $\dim\ [\g,\g]\geq\dim\ [\h,\h]$.
\item\label{inv:coh} $\dim\  H^k(\g)\leq\dim\ H^k(\h)$ for $0\leq k\leq n$, where $H^k(\g)$ is the $k$-th trivial cohomology group for $\g$.
\item\label{inv:ab} $\a(\g)\leq\a(\h)$, where $\a(\g)=\max\{\dim W:\ W\text{ is an abelian subalgebra of }\g\}$.
\end{enumerate}
\end{lemma}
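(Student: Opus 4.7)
The overall strategy exploits that $\h\in\overline{O(\g)}$ and that $\overline{O(\g)}$ is a $G$-invariant, irreducible, closed subset of $\mathcal{L}_n$; consequently, every $G$-invariant closed condition on $\mathcal{L}_n$ satisfied by $\g$ is automatically inherited by $\h$. For (a), since $O(\g)$ is open in its closure, whenever $\h\not\cong\g$ we have $\overline{O(\h)}\subsetneq\overline{O(\g)}$, and as both closures are irreducible this forces the strict inequality $\dim O(\h)=\dim\overline{O(\h)}<\dim\overline{O(\g)}=\dim O(\g)$.

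For parts (b), (c), and (d) I invoke semicontinuity of rank. The assignment $x\mapsto[x,\cdot]$ defines a linear map $V\to\Hom(V,V)$ whose matrix depends polynomially on the structure constants $\{c_{ij}^k\}$ and whose kernel is $\z(\mathfrak{l})$; since rank is lower semicontinuous, $\dim\z$ is upper semicontinuous, so the closed condition $\dim\z(\mathfrak{l})\geq\dim\z(\g)$ propagates from $\g$ to $\h$. The bracket viewed as a map $\Lambda^2 V\to V$ has image $[\mathfrak{l},\mathfrak{l}]$, so $\dim[\mathfrak{l},\mathfrak{l}]$ is lower semicontinuous, establishing (c). For (d), the Chevalley--Eilenberg differentials $d^k_\mathfrak{l}\colon\Hom(\Lambda^k V,\C)\to\Hom(\Lambda^{k+1}V,\C)$ have matrix entries polynomial in $\{c_{ij}^k\}$, so their ranks are lower semicontinuous; the formula
\[\dim H^k(\mathfrak{l})=\binom{n}{k}-\operatorname{rank}(d^k_\mathfrak{l})-\operatorname{rank}(d^{k-1}_\mathfrak{l})\]
then shows that $\dim H^k$ is upper semicontinuous.

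For (e) I consider the incidence variety
\[Z_k=\bigl\{(\mathfrak{l},W)\in\mathcal{L}_n\times\operatorname{Gr}(k,V):[W,W]_\mathfrak{l}=0\bigr\},\]
cut out by polynomial equations in the structure constants and the Plücker coordinates of $W$. Since $\operatorname{Gr}(k,V)$ is projective, the projection $Z_k\to\mathcal{L}_n$ is closed, so its image $\{\mathfrak{l}:\a(\mathfrak{l})\geq k\}$ is closed, which makes $\a$ upper semicontinuous and yields (e). The only technical content across the five items is verifying that the defining conditions are polynomial in the structure constants (and, in (e), in the Plücker coordinates); the bookkeeping is most involved in (d), where one must unwind the explicit formula for $d^k_\mathfrak{l}$. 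Beyond this, each inequality reduces to the single principle that a $G$-invariant closed subset of $\mathcal{L}_n$ containing $\g$ must also contain $\overline{O(\g)}$, and in particular $\h$.
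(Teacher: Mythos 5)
Your proposal is correct and follows essentially the same route as the paper, which merely cites the Closed Orbit Lemma for (a) and defers (b)--(e) to references; the semicontinuity-of-rank arguments and the proper projection from the incidence variety over the Grassmannian are exactly the standard proofs behind those citations. Your handling of (a) via proper containment of irreducible orbit closures (under the implicit assumption $\h\not\cong\g$) is an equivalent variant of the Closed Orbit Lemma argument.
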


\begin{proof}
The first relation follows from the Closed Orbit Lemma (see \cite{B}, I. Lemma 1.8, p. 53). Items (b) and (c) follow by proving that the corresponding sets are closed (see \cite{C}, $\S$3 Theorem 2, p. 14). The proofs of $(d)$ and $(e)$ can be found in \cite{B2} and \cite{GO}, respectively.
\end{proof}

Using the previous Lemma we obtain a series of non-degenerations that are summarized in Table \ref{Table1}. We explain the arguments with an example: $N_1^{8,3}\not\to (37D)$ because $\dim H^4(N_1^{8,3})=30>28=\dim H^4((37D))$ contradicting Lemma \ref{lemma:rel} (\ref{inv:coh}).
\begin{landscape}

\begin{center}
\begin{table}
\begin{spacing}{1.25}
\caption{Non-degenerations}
\label{Table1}
\begin{tabular}{|l|l|}
\hline
\multicolumn{1}{|c|}{$\g\not\to\h$} & Reason\\ \hline

$\n_{5,1}\not\to\n_{5,3};\quad \n_{6,1}\not\to(17),\n_{5,3};\quad (37A)\not\to(17);\quad \n_{6,2}\not\to(17);\quad \n_{3,1}\oplus\n_{3,1}\not\to(17)$ & \multirow{11}{*}{Lemma \ref{lemma:rel} (\ref{inv:center})}\\ 

$(37C)\not\to(17);\quad (27A)\not\to(17);\quad (37B)\not\to(27A),(17);\quad (27B)\not\to N_4^{8,2},(17)$ &  \\ 

$N_5^{8,3}\not\to N_4^{8,2},(17);\quad (37D)\not\to N_4^{8,2},(27A),(17);\quad N_2^{8,4}\not\to N_4^{8,2},(17),(27A)$ &  \\ 

$\n_{5,3}\oplus\n_{3,1}\not\to N_4^{8,2},(17);\quad N_2^{8,3}\not\to\n_{5,3}\oplus\n_{3,1},N_4^{8,2},(17);\quad N_8^{8,3}\not\to\n_{5,3}\oplus\n_{3,1},N_4^{8,2},(17)$ &  \\ 

$\n_{5,1}\oplus\n_{3,1}\not\to N_4^{8,2},(17),\n_{5,3}\oplus\n_{3,1};\quad N_1^{8,3}\not\to N_2^{8,2},N_4^{8,2},(17);\quad N_1^{8,3}\not\to\n_{5,3}\oplus\n_{3,1}$ & \\ 

$N_3^{8,4}\not\to N_2^{8,2},N_2^{8,3},N_8^{8,3},\n_{5,1}\oplus\n_{3,1},(27B),N_5^{8,3},\n_{5,3}\oplus\n_{3,1},N_4^{8,2},(27A),(17)$ &  \\ 

$N_7^{8,3}\not\to N_2^{8,2},N_4^{8,2},(17);\quad N_7^{8,3}\not\to\n_{5,3}\oplus\n_{3,1};\quad N_3^{8,3}\not\to N_5^{8,2},N_2^{8,2},\n_{5,3}\oplus\n_{3,1},N_4^{8,2},(17)$ &  \\ 

$N_1^{8,4}\not\to N_1^{8,3},N_2^{8,2},N_2^{8,3},N_8^{8,3},\n_{5,1}\oplus\n_{3,1},(27B),N_5^{8,3},\n_{5,3}\oplus\n_{3,1},N_4^{8,2},(27A),(17)$ &  \\ 

$N_6^{8,3}\not\to N_3^{8,2},N_5^{8,2},N_2^{8,2},N_4^{8,2},(17),\n_{5,3}\oplus\n_{3,1};\quad N_{11}^{8,3}\not\to N_1^{8,2},N_3^{8,2},N_5^{8,2},N_2^{8,2},N_4^{8,2},(17),\n_{5,3}\oplus\n_{3,1}$ &  \\ 

$N_{10}^{8,3}\not\to N_3^{8,2},N_5^{8,2},N_2^{8,2},N_4^{8,2},(17),\n_{5,3}\oplus\n_{3,1};\quad N_{4}^{8,3}\not\to N_1^{8,2},N_3^{8,2},N_5^{8,2},N_2^{8,2},N_4^{8,2},(17),\n_{5,3}\oplus\n_{3,1}$ &  \\ 

$N_{9}^{8,3}\not\to N_1^{8,2},N_3^{8,2},N_5^{8,2},N_2^{8,2},(17),N_4^{8,2},\n_{5,3}\oplus\n_{3,1}$ &  \\ \hline\hline

$(17)\not\to\n_{5,1};\quad \n_{6,2}\not\to\n_{6,1};\quad \n_{3,1}\oplus\n_{3,1}\not\to(37A),\n_{6,1};\quad (27A)\not\to(37C),(37A),\n_{6,1}$ & \multirow{9}{*}{Lemma \ref{lemma:rel} (\ref{inv:der})}\\ 

$N_4^{8,2}\not\to(37C),(37A),\n_{6,1};\quad (27B)\not\to (37B),(37C),(37A),\n_{6,1};\quad \n_{5,3}\oplus\n_{3,1}\not\to (37B),(37C),(37A),\n_{6,1}$ & \\ 

$ N_2^{8,2}\not\to N_5^{8,3},(37D),N_2^{8,4},(37B),(37C),(37A),\n_{6,1};\quad N_2^{8,3}\not\to N_2^{8,4};\quad N_8^{8,3}\not\to N_2^{8,4}$ &  \\

$\n_{5,1}\oplus\n_{3,1}\not\to N_2^{8,4};\quad N_1^{8,3}\not\to N_2^{8,4};\quad N_3^{8,2}\not\to N_7^{8,3},N_1^{8,4},N_1^{8,3},N_3^{8,4},N_2^{8,3},N_8^{8,3},\n_{5,1}\oplus\n_{3,1}$ & \\ 

$N_5^{8,2}\not\to N_1^{8,3},N_3^{8,4},N_2^{8,3},N_8^{8,3},\n_{5,1}\oplus\n_{3,1},N_5^{8,3},(37D),N_2^{8,4},(37B),(37C),(37A),\n_{6,1}$ &  \\ 

$N_3^{8,2}\not\to N_5^{8,3},(37D),N_2^{8,4},(37B),(37C),(37A),\n_{6,1};\quad N_3^{8,3}\not\to N_1^{8,4},N_3^{8,4},N_2^{8,4}$ &  \\ 

$N_1^{8,2}\not\to N_3^{8,3},N_7^{8,3},N_1^{8,4},N_1^{8,3},N_3^{8,4},N_2^{8,3},N_8^{8,3},\n_{5,1}\oplus\n_{3,1},N_5^{8,3},(37D),N_2^{8,4},(37B),(37C),(37A),\n_{6,1}$ &   \\ 

$N_7^{8,3}\not\to N_3^{8,4},N_2^{8,4};\quad N_6^{8,3}\not\to N_1^{8,4},N_3^{8,4},N_2^{8,4};\quad N_{11}^{8,3}\not\to N_1^{8,4},N_3^{8,4},N_2^{8,4}$ &  \\ 

$N_{10}^{8,3}\not\to N_1^{8,4},N_3^{8,4},N_2^{8,4};\quad N_{4}^{8,3}\not\to N_1^{8,4},N_3^{8,4},N_2^{8,4};\quad N_{9}^{8,3}\not\to N_{4}^{8,3},N_1^{8,4},N_3^{8,4},N_2^{8,4}$ &  \\ \hline\hline

$(37A)\not\to\n_{6,1};\quad N_7^{8,3}\not\to N_1^{8,3};\quad N_3^{8,3}\not\to N_1^{8,3};\quad N_{10}^{8,3}\not\to N_1^{8,3}$ & Lemma \ref{lemma:rel} (\ref{inv:coh}) $k=2$ \\ \hline\hline

$\n_{5,1}\oplus\n_{3,1}\not\to (27B);\quad N_1^{8,2}\not\to N_4^{8,2}$ &  Lemma \ref{lemma:rel} (\ref{inv:coh}) $k=3$ \\ \hline\hline

$N_2^{8,2}\not\to N_4^{8,2},(17);\quad N_2^{8,3}\not\to(37D);\quad \n_{5,1}\oplus\n_{3,1}\not\to(37D);\quad N_1^{8,3}\not\to(37D)$ & Lemma \ref{lemma:rel} (\ref{inv:coh}) $k=4$\\ \hline\hline

$(37A)\not\to\n_{5,3};\quad N_2^{8,2}\not\to\n_{5,3}\oplus\n_{3,1}$ & Lemma \ref{lemma:rel} (\ref{inv:ab}) \\ \hline\hline

$N_1^{8,3}\not\to N_8^{8,3}$ & $N_1^{8,3}\not\to (37D)$ \\ \hline
\hline
\end{tabular}
\end{spacing}
\end{table}

\end{center}

\end{landscape}

\normalsize

We continue in proving the non-existence of degenerations. The next Lemma can be found, for instance, in \cite{GO} or \cite{S6}:

\begin{lemma}
Let $B$ be a Borel subgroup of $\GL(n,\C)$ and let $S\in\mathcal{L}_n$ be a closed subset which is $B$-stable. If $\g\to\h$ and $\g\in S$ then there exists a Lie algebra $\overline{\h}\in S$ such that $\overline{\h}\simeq\h$. 
\end{lemma}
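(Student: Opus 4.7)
The plan is to establish the structural identity $\overline{O(\g)} = G \cdot \overline{B \cdot \g}$, from which the conclusion follows immediately. Since $S$ is $B$-stable and contains $\g$, we have $B \cdot \g \subseteq S$, and closedness of $S$ then yields $\overline{B \cdot \g} \subseteq S$. So it suffices to find $\overline{\h} \in \overline{B \cdot \g}$ lying in the same $G$-orbit as $\h$.

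The main step is to show that $G \cdot \overline{B \cdot \g}$ is closed in $\mathcal{L}_n$. I would argue this via the classical completeness-of-$G/B$ trick: consider
\[\Gamma = \{(gB, x) \in G/B \times \mathcal{L}_n : g^{-1}\cdot x \in \overline{B \cdot \g}\},\]
which is well-defined on cosets because $\overline{B \cdot \g}$ is $B$-stable, and is closed in $G/B \times \mathcal{L}_n$ (it descends from a closed $B$-saturated subset of $G \times \mathcal{L}_n$ under the principal $B$-bundle $G \to G/B$). Since $G/B$ is a complete variety, the second projection $G/B \times \mathcal{L}_n \to \mathcal{L}_n$ is a closed morphism, so the image of $\Gamma$, which is exactly $G \cdot \overline{B \cdot \g}$, is closed in $\mathcal{L}_n$. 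This image is $G$-stable and contains $O(\g)$, hence contains $\overline{O(\g)}$; the reverse inclusion is obvious.

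To finish: $\g \to \h$ means $\h \in \overline{O(\g)} = G \cdot \overline{B \cdot \g}$, so we may write $\h = g \cdot \overline{\h}$ for some $g \in G$ and some $\overline{\h} \in \overline{B \cdot \g} \subseteq S$. This $\overline{\h}$ lies in $S$ and is isomorphic to $\h$ as a Lie algebra, being in the same $G$-orbit, as required. The only substantive input is the completeness of $G/B$ (see \cite{B}), and this is the step I expect to be the main obstacle to a fully self-contained write-up; once it is invoked, the rest is a direct formal consequence that I would merely have to spell out.
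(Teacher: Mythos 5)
Your proposal is correct, and it is essentially the argument behind the paper's proof: the paper itself gives no proof of this lemma, only the citation to \cite{GO} and \cite{S6}, and the proof found there is exactly your identity $\overline{O(\g)}=G\cdot\overline{B\cdot\g}$ established via the completeness of $G/B$ (image of the closed incidence set under the proper projection $G/B\times\mathcal{L}_n\to\mathcal{L}_n$), followed by the observation that $\overline{B\cdot\g}\subseteq S$. All the steps you sketch — well-definedness of $\Gamma$ on cosets via $B$-stability of $\overline{B\cdot\g}$, closedness of the image, and the two inclusions — check out.
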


\medskip

Consider now the following sets:
\begin{align*}
S_1=&\left\{\g=(c_{ij}^k)\in\mathcal{L}_8\ \left|\ 
\begin{array}{ll}
c_{r6}^7=0, &\text{ for }1\leq r\leq 5\\
c_{rs}^t=0, &\text{ for }7\leq s\leq 8,\ 1\leq r<s\\
\end{array}\right.\right\},\\
S_2=&\left\{\g=(c_{ij}^k)\in\mathcal{L}_8\ \left|\ \begin{array}{ll}c_{rs}^7=\lambda c_{rs}^6, & \text{for } 4\leq s\leq 5,\ 1\leq r<s\\
c_{rs}^8=\mu c_{r4}^6, & \text{for } 4\leq s\leq 5,\ 1\leq r<s\\
c_{rs}^t=0, &\text{ for }6\leq s\leq 8,\ 1\leq r<s\\
\lambda,\mu\in\C. & 
\end{array}\right.\right\},\\
S_3=&\left\{\g=(c_{ij}^k)\in\mathcal{L}_8\ \left|\ \begin{array}{ll}c_{rs}^6=0, & \text{for } 3\leq s\leq 5,\ 1\leq r<s\\
c_{r5}^8=\lambda c_{r5}^7, & \text{for } 1\leq r\leq 4\\
c_{r5}^7=\mu_r c_{45}^7, & \text{for } 1\leq r\leq 3\\
c_{rs}^t=0, &\text{ for }6\leq s\leq 8,\ 1\leq r<s\\
\lambda,\mu_1,\mu_2,\mu_3\in\C. & 
\end{array}\right.\right\},\\
S_4=&\left\{\g=(c_{ij}^k)\in\mathcal{L}_8\ \left|\ \begin{array}{ll}c_{rs}^6=0, & \text{for } 3\leq s\leq 5,\ 1\leq r<s\\
c_{rs}^7=0, & \text{for } 3\leq s\leq 5,\ 2\leq r<s\\
c_{1r}^7=0, & \text{for } 4\leq r\leq 5\\
c_{r5}^8=\mu_r c_{45}^8, & \text{for } 1\leq r\leq 3\\
c_{rs}^t=0, &\text{ for }6\leq s\leq 8,\ 1\leq r<s\\
\mu_1,\mu_2,\mu_3\in\C. & 
\end{array}\right.\right\}.
\end{align*}

\medskip

It is not difficult to check that all these sets are $B$-stable. Then we obtain the following results:

\begin{enumerate}
\item $N_3^{8,2}\in S_1$ but there is no $g\in\GL(8,\C)$ such that $g\cdot N_5^{8,2}\in S_1$, therefore $N_3^{8,2}\not\to N_5^{8,2}$.

\medskip

\item $N_1^{8,3}\in S_2$ but there is no $g\in\GL(8,\C)$ such that $g\cdot (27B)\in S_2$, therefore $N_1^{8,3}\not\to (27B)$. This also implies that $N_2^{8,3}\not\to (27B)$. 

\medskip

\item $N_3^{8,3}\in S_3$ but there is no $g\in\GL(8,\C)$ such that $g\cdot N_7^{8,3}\in S_3$, therefore $N_3^{8,3}\not\to N_7^{8,3}$.

\medskip

\item $\n_{5,1}\oplus\n_{3,1}\in S_4$ but there is no $g\in\GL(8,\C)$ such that $g\cdot N_5^{8,3}\in S_4$, therefore $\n_{5,1}\oplus\n_{3,1}\not\to N_5^{8,3}$.
\end{enumerate}

\bigskip

Finally, we want to prove the following lemma:

\begin{lemma}\label{lem:ultdeg}
$\n_{5,1}\oplus\n_{3,1}$ is not in the orbit closure of $N_7^{8,3}$.
\end{lemma}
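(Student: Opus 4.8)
The plan is to argue by contradiction with the same two‑pronged strategy used for $S_1$--$S_4$: exhibit a closed, $B$‑stable set $S_5\subseteq\mathcal{L}_8$ with $N_7^{8,3}\in S_5$ and such that no $\GL(8,\C)$‑translate of $\n_{5,1}\oplus\n_{3,1}$ lies in $S_5$, and then invoke the Lemma on $B$‑stable sets stated above. A bespoke set is needed because the cheap invariants of Lemma~\ref{lemma:rel} all coincide or point the admissible way: both algebras satisfy $[\g,\g]=\z(\g)$ with $\dim[\g,\g]=\dim\z(\g)=3$, both have $\a(\g)=6$, and the orbit dimensions $40>38$ are consistent with a degeneration; the cohomological bounds $\dim H^k$ do not separate them either, or the pair would already appear in the tables of non‑degenerations above. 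Hence the separating condition must record a finer feature of the bracket.

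First I would read off that feature. Since $[\g,\g]=\z(\g)$ is $3$‑dimensional with a $5$‑dimensional complement of generators in each case, the bracket of either algebra is encoded by a net of alternating forms, a surjective linear map $\mu\colon\Lambda^2 V\to W$ with $V=\g/\z(\g)\cong\C^5$ and $W=\z(\g)\cong\C^3$. For $N_7^{8,3}$ the net is spanned by $e_1\w e_2+e_3\w e_4$, $e_1\w e_3$ and $e_1\w e_5+e_2\w e_4$; for $\n_{5,1}\oplus\n_{3,1}$ it is spanned by the three decomposable forms $f_1\w f_3$, $f_2\w f_3$, $f_4\w f_5$. The structural distinction I would exploit is that for $N_7^{8,3}$ the locus in $\mathbb{P}(W^*)$ where the associated form has $\operatorname{rank}\le 2$ is a single (fat) point, and the radicals of the generic rank‑$4$ forms span the $3$‑dimensional subspace $\s\{e_2,e_3,e_5\}\subseteq V$, whereas for $\n_{5,1}\oplus\n_{3,1}$ this rank‑drop locus contains a whole line (all forms $\alpha f_1\w f_3+\beta f_2\w f_3$ have rank $2$). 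I would turn this into vanishing and proportionality relations among the structure constants relative to the flag with $\s\{e_6,e_7,e_8\}$ as commutator, exactly in the style of $S_1$--$S_4$: the vectors $e_6,e_7,e_8$ commute with all lower‑indexed vectors, all brackets land in $\s\{e_6,e_7,e_8\}$, together with the proportionality relations ($c^7_{rs}$ tied to $c^6_{rs}$, etc.) that pin the rank‑drop structure. One checks that such relations are $B$‑stable and closed and that $N_7^{8,3}$ satisfies them by its defining brackets.

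The exclusion step is to assume $g\cdot(\n_{5,1}\oplus\n_{3,1})\in S_5$ for some $g\in\GL(8,\C)$ and to show the resulting system in the entries of $g$ is inconsistent: the proportionality relations defining $S_5$ cannot be met by any basis change applied to the decomposable net $f_1\w f_3,\ f_2\w f_3,\ f_4\w f_5$, because the relations encode the rigidity of $N_7^{8,3}$'s net that $\n_{5,1}\oplus\n_{3,1}$ does not share under $\GL(V)\times\GL(W)$. By the Lemma on $B$‑stable sets this yields $N_7^{8,3}\not\to\n_{5,1}\oplus\n_{3,1}$, that is, $\n_{5,1}\oplus\n_{3,1}\notin\overline{O(N_7^{8,3})}$.

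The hard part is the design of $S_5$, and this is precisely why the case is delicate: $\n_{5,1}\oplus\n_{3,1}$ is genuinely the more degenerate algebra, so it tends to satisfy \emph{more} closed conditions than $N_7^{8,3}$, and a separating set built from vanishing relations alone is bound to fail. The decisive relations must therefore be of proportionality type, chosen so that they hold for $N_7^{8,3}$ yet force, upon being imposed on a translate of $\n_{5,1}\oplus\n_{3,1}$, a contradictory rank or dependence condition; verifying the inconsistency of that polynomial system is the substantive computation. Should a single $B$‑stable set prove too rigid, the fallback is the direct limit method: assume $\lim_{t\to0}g_t\cdot N_7^{8,3}=\n_{5,1}\oplus\n_{3,1}$ for some $g_t\in\GL(8,\C(t))$, normalise $g_t$ against the common $3$‑dimensional center and commutator to reduce to a curve in $\GL(5,\C(t))\times\GL(3,\C(t))$ acting on the associated nets, and extract the contradiction from the leading‑order matching of the structure constants.
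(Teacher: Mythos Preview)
Your plan does not contain a proof: the set $S_5$ is never constructed, and you yourself flag its design as ``the hard part.'' Everything preceding that admission is preparatory---your analysis of the two nets of alternating forms and of their rank--$\le 2$ loci is correct---but without an explicit $S_5$ that is closed, $B$--stable, contains $N_7^{8,3}$, and provably excludes every $\GL(8,\C)$--translate of $\n_{5,1}\oplus\n_{3,1}$, the Borel--stable--set lemma cannot be invoked. Your own remark that $\n_{5,1}\oplus\n_{3,1}$ ``tends to satisfy more closed conditions'' pinpoints exactly why manufacturing such an $S_5$ is hard; the proportionality relations you allude to are never written down, let alone checked for $B$--stability or for the exclusion step. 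The fallback limit argument is likewise only gestured at.

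The paper takes a different and sharper route. It realises both algebras as central extensions not of the abelian $\C^5$ but of the \emph{nonabelian} $\g=\h_3\oplus\C^3$ by $\C^2$, writing $\n_{5,1}\oplus\n_{3,1}=\g(b_0)$ and $N_7^{8,3}=\g(b_1)$ for explicit cocycles $b_0,b_1\in\mathbb{B}_\g^2$. By the Grunewald--O'Halloran criterion (Theorem~\ref{thm:orbit_coc}) the degeneration is equivalent to $b_0\in\overline{O(b_1)}$, which by their Theorem~1.2 amounts to the existence of a block--lower--triangular $g\in\GL(8,\C(Z))$ with top block in $\operatorname{Aut}(\g)$ and a point $x\in Z$ such that $(g\cdot b_1)|_x=b_0$. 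Because $\operatorname{Aut}(\h_3\oplus\C^3)$ is small enough to parametrise explicitly, one writes out $g\cdot b_1(e_i,e_j)$ for the relevant pairs, evaluates at $x$, and obtains the four scalar equations \eqref{eq1}--\eqref{eq4}; a short case split on $a_{44}(x)$ and $a_{64}(x)$ shows they are inconsistent. The key move you are missing is this choice of nonabelian base: passing to cocycles over $\h_3\oplus\C^3$ shrinks the symmetry group enough to make the exclusion a finite hand computation, whereas your reduction to nets over $\C^5$ keeps the full $\GL(5)\times\GL(3)$ acting and leaves the essential difficulty in place.
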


In order to do this, we use the results in \cite{GO2}. First, denote by $\g(b)$ the central extension of $\g$ by $\C^r$ defined by the 2-cocycle $b$ and let $\mathbb{B}_\g^r=\{b\in Z^2(\g,\C^r):\ b^\perp\cap\z(\g)=0\}$. Then one has:

\begin{theorem}[\cite{GO2}, Theorem 2.2]\label{thm:orbit_coc}
For $b_0, b_1\in\mathbb{B}_\g^r$, $b_0\in\overline{O(b_1)}$ if and only if $\g(b_0)\in\overline{O(\g(b_1))}$.
\end{theorem}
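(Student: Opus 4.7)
The plan is to apply Theorem~\ref{thm:orbit_coc} to transfer the non-degeneration to the cocycle level, where it can be attacked directly. Both $N_7^{8,3}$ and $\n_{5,1}\oplus\n_{3,1}$ are $2$-step nilpotent of dimension $8$ with $3$-dimensional center equal to the derived subalgebra, so each is a central extension of the abelian Lie algebra $\g=\C^5$ by $\C^3$. Reading off the brackets from the table, one identifies $N_7^{8,3}\cong\g(b_1)$ and $\n_{5,1}\oplus\n_{3,1}\cong\g(b_0)$, where in suitable bases
\begin{align*}
b_1&=(e_{12}+e_{34},\ e_{13},\ e_{15}+e_{24}),\\
b_0&=(e_{13},\ e_{23},\ e_{45}),
\end{align*}
with $e_{ij}:=e_i^*\w e_j^*\in\wedge^2(\C^5)^*$. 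A direct check that the radicals $b_0^\perp$ and $b_1^\perp$ are trivial yields $b_0,b_1\in\mathbb{B}_\g^3$, so by Theorem~\ref{thm:orbit_coc} the statement is equivalent to $b_0\notin\overline{O(b_1)}$ in $Z^2(\g,\C^3)$, for the natural $\GL(5)\times\GL(3)$-action.

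To establish this cocycle non-containment, I would study the Pfaffian pencil of each cocycle: for $(\alpha,\beta,\gamma)\in\C^3$, form the skew $5\times5$ matrix $M_b(\alpha,\beta,\gamma)=\alpha b^1+\beta b^2+\gamma b^3$ and compute its five $4\times4$ Pfaffians. A direct calculation gives the Pfaffian vector $(0,\alpha\gamma,\gamma^2,0,\alpha^2-\beta\gamma)$ for $b_1$ and $(\beta\gamma,\alpha\gamma,0,0,0)$ for $b_0$. The projective rank-$\leq 2$ locus $V_2(b)\subseteq\mathbb{P}^2$ is accordingly a single reduced point for $b_1$, whereas for $b_0$ it decomposes as the projective line $\{\gamma=0\}$ together with the isolated reduced point $[0{:}0{:}1]$. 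The $1$-parameter family $(\alpha e_1^*+\beta e_2^*)\w e_3^*$ of decomposable $2$-forms in the line component of $V_2(b_0)$, all sharing the common factor $e_3^*$, is precisely the kind of structural feature one seeks to rule out from any orbit translate appearing in the closure. An alternative, parallel to the use of the sets $S_1,\ldots,S_4$ above, is to construct a Borel-stable subset $S\subseteq Z^2(\g,\C^3)$ containing $b_1$ but disjoint from the $(\GL(5)\times\GL(3))$-orbit of $b_0$; any such $S$ would force $b_0\notin\overline{O(b_1)}$ by the Borel-stability lemma.

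The main obstacle is that the standard Lie-algebra invariants of Lemma~\ref{lemma:rel}, and the coarsest cocycle invariants (the dimension of $V_2(b)$, the orbit dimension in $Z^2$, the generic rank of the slice maps $b(v,-)$), are all compatible with the putative degeneration. The non-containment must therefore rest on a finer invariant, either a scheme-theoretic feature of the Pfaffian ideal that cannot arise in the orbit closure of $b_1$, or a carefully tuned Borel-stable condition. Identifying the correct one is the substantive part of the proof.
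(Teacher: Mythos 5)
Your proposal does not prove the statement at issue. Theorem \ref{thm:orbit_coc} is the general Grunewald--O'Halloran equivalence: for cocycles $b_0,b_1\in\mathbb{B}_\g^r$, one has $b_0\in\overline{O(b_1)}$ if and only if $\g(b_0)\in\overline{O(\g(b_1))}$. Proving it requires passing between a degeneration of the central extensions and a degeneration at the level of $2$-cocycles, in both directions; the nontrivial direction rests on the characterization of orbit closures via points of $\GL(n,\C(Z))$ for an affine set $Z$ (Theorem 1.2 of \cite{GO2}). The paper does not reprove this result --- it cites \cite{GO2}, Theorem 2.2 --- but your text does not engage with it at all: you take the theorem as given and sketch an \emph{application} of it to the non-degeneration of Lemma \ref{lem:ultdeg}. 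In other words, you assume exactly the statement you were asked to prove, which is a complete miss of the target.

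Even read charitably as an attempt at Lemma \ref{lem:ultdeg}, the argument is not finished. You compute Pfaffian loci for the two nets of skew forms, observe that the coarse invariants fail to separate the orbits, and end by conceding that the decisive invariant or Borel-stable set ``is the substantive part of the proof'' and remains to be found; no contradiction is actually derived. For comparison, the paper's proof of that lemma realizes both algebras as central extensions of $\g=\h_3\oplus\C^3$ by $\C^2$, not of the abelian $\C^5$ by $\C^3$ as you do. The point of that choice is that Theorem 1.2 of \cite{GO2} then forces the degenerating operator $g$ into a block form with $\alpha\in\operatorname{Aut}(\h_3\oplus\C^3)$, and the constrained shape of $\alpha^{-1}$ (the forced zeros in its third column) produces the explicit equations \eqref{eq1}--\eqref{eq4}, whose case-by-case inconsistency gives the contradiction. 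With the abelian base you discard exactly those constraints and are left with the full $\GL(5)\times\GL(3)$-action on nets of alternating forms, for which, as you yourself note, your Pfaffian data is insufficient.
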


\medskip

With this result we obtain the last non-degeneration:

\medskip

\begin{proof}[Proof of Lemma \ref{lem:ultdeg}]
A base change allow us to write the following products:

\[\begin{tabular}{c|c|c}
$\n_{5,1}\oplus\n_{3,1}$ & $N_7^{8,3}$ & $\g=\h_3\oplus\C^3$\\ \hline
$[e_1,e_2]=e_3$ & $[e_1,e_2]=e_3$ & $[e_1,e_2]=e_3$\\
$[e_1,e_4]=e_7$ & $[e_1,e_4]=e_7$ & \\
$[e_5,e_6]=e_8$ & $[e_1,e_5]=e_8$ & \\
 & $[e_2,e_6]=e_7$ & \\
  & $[e_4,e_6]=e_8$ & \\
\end{tabular}\]

It is clear that $\n_{5,1}\oplus\n_{3,1}=\g(b_0)$ and $N_7^{8,3}=\g(b_1)$ where

\[\begin{tabular}{c|c}
$b_0$ & $b_1$ \\ \hline
$b_0(e_1,e_4)=e_7$ & $b_1(e_1,e_4)=e_7$ \\
$b_0(e_5,e_6)=e_8$ & $b_1(e_2,e_6)=e_7$ \\
 & $b_1(e_2,e_5)=e_8$ \\
 & $b_1(e_4,e_6)=e_8$ \\
\end{tabular}\]

\medskip

By Theorem \ref{thm:orbit_coc}, $\n_{5,1}\oplus\n_{3,1}$ is in the orbit closure of $N_7^{8,3}$ if and only if $b_0$ is in the orbit closure of $b_1$.

\medskip

Suppose $b_0$ is in the orbit closure of $b_1$. Then by Theorem 1.2 of \cite{GO2}, there is a coordinate ring $\C[Z]$ for some affine set $Z$, an element $g\in \GL(8,\C(Z))$, and an element $x\in Z$ such that $b_0$ is the evaluation of $g\cdot b_1$ at $x$. The element $g$ is of the form
\[g=\begin{pmatrix}
\alpha & 0\\
u & \Phi
\end{pmatrix},\quad\text{where }\alpha^{-1}=\begin{pmatrix}
a_{11} & a_{12} & 0 & 0 & 0 & 0\\
a_{21} & a_{22} & 0 & 0 & 0 & 0\\
a_{31} & a_{32} & a_{33} & a_{34} & a_{35} & a_{36}\\
a_{41} & a_{42} & 0 & a_{44} & a_{45} & a_{46}\\
a_{51} & a_{52} & 0 & a_{54} & a_{55} & a_{56}\\
a_{61} & a_{62} & 0 & a_{64} & a_{65} & a_{66}
\end{pmatrix}\quad\text{and }\Phi=\begin{pmatrix}
p & r\\
q & s
\end{pmatrix},\]
($\alpha\in\operatorname{Aut}(\g)$). Then we have
\begin{align*}
g\cdot b_1(e_5,e_6)=&(a_{45}a_{66}-a_{65}a_{46})(re_7+se_8),\\
g\cdot b_1(e_4,e_5)=&(a_{44}a_{65}-a_{64}a_{45})(re_7+se_8),\\
g\cdot b_1(e_4,e_6)=&(a_{44}a_{66}-a_{64}a_{46})(re_7+se_8),\\
g\cdot b_1(e_1,e_4)=&(a_{11}a_{44}+a_{21}a_{64})(pe_7+qe_8)+(a_{11}a_{54}+a_{41}a_{64})(re_7+se_8).
\end{align*}

If $g\cdot b_1$ evaluated in $x$ is $b_0$ we obtain:
\begin{equation}\label{eq1}
e_8=(a_{45}(x)a_{66}(x)-a_{65}(x)a_{46}(x))(r(x)e_7+s(x)e_8),
\end{equation}

therefore $r(x)=0$ and $s(x)\neq0$. Moreover,
\begin{align}
0=&a_{44}(x)a_{65}(x)-a_{64}(x)a_{45}(x),\label{eq2}\\
0=&a_{44}(x)a_{66}(x)-a_{64}(x)a_{46}(x),\label{eq3}\\
1=&(a_{11}(x)a_{44}(x)+a_{21}(x)a_{64}(x))p(x).\label{eq4}
\end{align}

Equations (\ref{eq2}) and (\ref{eq3}) imply that
\begin{itemize}
\item If $a_{44}(x)\neq0$ then $a_{65}(x)=\displaystyle\frac{a_{64}(x)a_{45}(x)}{a_{44}(x)}$ and $a_{66}(x)=\displaystyle\frac{a_{64}(x)a_{46}(x)}{a_{44}(x)}$. This contradicts equation \eqref{eq1}.
\item If $a_{44}(x)=a_{64}(x)=0$ this contradicts equation \eqref{eq4}.
\item If $a_{44}(x)=a_{45}(x)=a_{46}(x)=0$ this contradicts equation \eqref{eq1}.
\end{itemize}

Therefore $\n_{5,1}\oplus\n_{3,1}\not\in\overline{O(N_7^{8,3})}$.

\end{proof}

\section{Degenerations}

Next, we list in Table \ref{Table2} all the primary degenerations (those that cannot be obtained by transitivity). 

\medskip

Consider for example the Lie algebra $N_5^{8,2}$ and the base change
\[x_1=e_8,\quad x_2=e_2,\quad x_3=e_4,\quad x_4=e_3,\quad x_5=e_5,\quad x_6=\frac{1}{t}e_7,\quad x_7=e_1+\frac{1}{t}e_5,\quad x_8=e_6.\]

Then the new product is given by:
\begin{align*}
[x_2,x_4]=&x_1,&[x_2,x_7]=&-tx_6,& [x_3,x_4]=&-tx_6,&[x_3,x_5]=&x_1,\\
[x_3,x_7]=&tx_1,& [x_5,x_8]=&tx_6,& [x_7,x_8]=&x_6.&&
\end{align*}

When $t\to 0$ we obtain the Lie product of $\n_{5,3}\oplus\n_{3,1}$, therefore $N_5^{8,2}\to\n_{5,3}\oplus\n_{3,1}$.

\scriptsize
{
\begin{spacing}{1.2}
\begin{longtable}{|c|llll|}
\caption[]{Degenerations}
\label{Table2}\\
\hline
$\g\to\h$ & \multicolumn{4}{|c|}{Parameterized basis} \\
\hline
\endfirsthead
\caption[]{(continued)}\\
\hline
$\g\to\h$ & \multicolumn{4}{|c|}{Parameterized basis} \\
\hline
\endhead

\multirow{2}{*}{$\n_{5,3}\to\n_{3,1}$} & $x_1=e_1$, & $x_2=e_2$, & $x_3=e_4$, & $x_4=e_3$, \\
& $x_5=te_5$, & $x_6=e_6$ & $x_7=e_7$ & $x_8=e_8$.\\ \hline

\multirow{2}{*}{$\n_{5,1}\to\n_{3,1}$} & $x_1=e_1$, & $x_2=e_3$, & $x_3=e_5$, & $x_4=te_4$,\\
 & $x_5=e_2$, & $x_6=e_6$ & $x_7=e_7$ & $x_8=e_8$.\\ \hline

\multirow{2}{*}{$(17)\to\n_{5,3}$} & $x_1=e_7$, & $x_2=e_1$, & $x_3=e_3$, & $x_4=e_2$, \\
& $x_5=e_4$, & $x_6=te_6$, & $x_7=e_5$ & $x_8=e_8$.\\ \hline

\multirow{2}{*}{$\n_{6,1}\to\n_{5,1}$} & $x_1=e_3$, & $x_2=e_1$, & $x_3=e_4$, & $x_4=e_5$,\\
 & $x_5=e_6$, & $x_6=\frac{1}{t}e_2$ & $x_7=e_7$, & $x_8=e_8$.\\ \hline

\multirow{2}{*}{$(37A)\to\n_{5,1}$} & $x_1=e_6$, & $x_2=e_7$, & $x_3=-e_3$, & $x_4=-e_4$,\\
 & $x_5=e_2$, & $x_6=te_1$, & $x_7=e_5$ & $x_8=e_8$.\\ \hline

\multirow{2}{*}{$\n_{6,2}\to\n_{5,1}$} & $x_1=e_1$, & $x_2=e_2$, & $x_3=e_3$, & $x_4=e_4$,\\
 & $x_5=e_6$, & $x_6=te_5$ & $x_7=e_7$, & $x_8=e_8$. \\ \hline

\multirow{2}{*}{$\n_{6,2}\to\n_{5,3}$} & $x_1=e_1$, & $x_2=e_3$, & $x_3=e_5$, & $x_4=e_6$,\\
 & $x_5=e_4$, & $x_6=\frac{1}{t}e_2$ & $x_7=e_7$, & $x_8=e_8$.\\ \hline

\multirow{2}{*}{$\n_{3,1}\oplus\n_{3,1}\to\n_{6,2}$} & $x_1=-\frac{1}{t}e_4$, & $x_2=e_1+\frac{1}{t^{2}}e_4$, & $x_3=e_6$, & $x_4=e_2-\frac{1}{t}e_6$,\\
 &  $x_5=e_5$ & $x_6=\frac{1}{t}e_5+e_3$, & $x_7=e_7$, & $x_8=e_8$.\\ \hline

\multirow{2}{*}{$(37C)\to\n_{3,1}\oplus\n_{3,1}$} & $x_1=e_5$, & $x_2=\frac{1}{t}e_1$, & $x_3=te_2$, & $x_4=t^2e_7$, \\
& $x_5=te_3+t^2e_2$, & $x_6=e_4$, & $x_7=e_6$, & $x_8=e_8$.\\ \hline

\multirow{2}{*}{$(37C)\to(37A)$} & $x_1=e_1$, & $x_2=e_2$, & $x_3=te_3$, & $x_4=e_4$,\\
 & $x_5=e_5$, & $x_6=te_6$, & $x_7=e_7$, & $x_8=e_8$.\\ \hline

\multirow{2}{*}{$(37C)\to\n_{6,1}$} & $x_1=e_5$, & $x_2=e_6$, & $x_3=e_7$, & $x_4=e_2$, \\
& $x_5=e_3$, & $x_6=e_4$, & $x_7=te_1$, & $x_8=e_8$.\\ \hline

\multirow{2}{*}{$(27A)\to\n_{3,1}\oplus\n_{3,1}$} & $x_1=e_6$, & $x_2=e_1$, & $x_3=e_2$, & $x_4=e_7$, \\
& $x_5=e_3$, & $x_6=e_5$, & $x_7=te_4$, & $x_8=e_8$.\\ \hline

\multirow{2}{*}{$N_4^{8,2}\to (27A)$} & $x_1=e_4$, & $x_2=e_5$, & $x_3=e_1$, & $x_4=-e_3$, \\
& $x_5=e_2$, & $x_6=e_8$, & $x_7=e_7$, &  $x_8=te_6$.\\ \hline

\multirow{2}{*}{$N_4^{8,2}\to(17)$} & $x_1=e_1$, & $x_2=e_2$, & $x_3=e_3$, & $x_4=e_4$, \\
& $x_5=e_5$, & $x_6=e_6$, & $x_7=e_7$, & $x_8=\frac{1}{t}e_8$.\\ \hline

\multirow{2}{*}{$(37B)\to(37C)$} & $x_1=e_3$, & $x_2=e_1+i\frac{1}{\sqrt{t}}e_2+i\frac{1}{\sqrt{t^3}}e_3-\frac{1}{t}e_4$, & $x_3=-\frac{1}{\sqrt{t}}e_3+e_4$, & $x_4=e_2+\frac{1}{t}e_3$, \\
& $x_5=-\frac{1}{t}e_7$, & $x_6=\frac{1}{t}e_6$, & $x_7=e_5+\frac{1}{t^2}e_7$, & $x_8=e_8$.\\ \hline

\multirow{2}{*}{$(27B)\to(27A)$} & $x_1=e_3$, & $x_2=e_4$, & $x_3=te_1$, & $x_4=-e_2$, \\
& $x_5=\frac{1}{t}e_5$, & $x_6=e_6$, & $x_7=e_7$, & $x_8=e_8$.\\ \hline

\multirow{2}{*}{$N_5^{8,3}\to(37B)$} & $x_1=-e_4$, & $x_2=e_1$, & $x_3=e_2$, & $x_4=e_5$, \\
& $x_5=e_8$, & $x_6=e_6$, & $x_7=e_7$, & $x_8=te_3$. \\ \hline

\multirow{2}{*}{$N_5^{8,3}\to(27A)$} & $x_1=e_1$, & $x_2=e_4$, & $x_3=e_2$, & $x_4=e_3$, \\
& $x_5=e_5$, & $x_6=e_8$, & $x_7=e_7$, & $x_8=\frac{1}{t}e_6$. \\ \hline

\multirow{2}{*}{$(37D)\to(37B)$} & $x_1=e_2$, & $x_2=e_4$, & $x_3=-e_3$, & $x_4=te_1$, \\
& $x_5=e_7$, & $x_6=e_5$, & $x_7=te_6$, & $x_8=e_8$.\\ \hline

\multirow{2}{*}{$N_2^{8,4}\to (37B)$} & $x_1=e_3$, & $x_2=e_2$, & $x_3=e_1$, & $x_4=e_4$, \\
& $x_5=-e_7$, & $x_6=-e_5$, & $x_7=e_8$, &  $x_8=\frac{1}{t}e_6$. \\ \hline

\multirow{2}{*}{$\n_{5,3}\oplus\n_{3,1}\to(27A)$} & $x_1=e_5$, & $x_2=-e_3$, & $x_3=e_7$, & $x_4=-e_4$,\\
 & $x_5=e_8$, & $x_6=e_1$, & $x_7=e_2$, & $x_8=\frac{1}{t}(e_6-e_2)$. \\ \hline

\multirow{2}{*}{$N_2^{8,2}\to (27B)$} & $x_1=e_1$, & $x_2=e_2-e_6$, & $x_3=e_4$, & $x_4=e_5$, \\
& $x_5=e_3$, & $x_6=e_7$, & $x_7=e_8$, & $x_8=te_6$.\\ \hline

\multirow{2}{*}{$N_2^{8,3}\to N_5^{8,3}$} & $x_1=t^{-1}(e_2+e_3)$, & $x_2=te_4$, & $x_3=-te_1$, & $x_4=-te_2$, \\
& $x_5=t^{-1}e_5$, & $x_6=e_8$, & $x_7=e_6$, & $x_8=e_7$. \\ \hline

%

\multirow{2}{*}{$N_8^{8,3}\to (27B)$} & $x_1=e_2$, & $x_2=e_3$, & $x_3=-e_1$, & $x_4=-e_4$, \\
& $x_5=e_5$, & $x_6=e_8$, & $x_7=e_7$, & $x_8=\frac{1}{t}e_6$. \\ \hline

\multirow{2}{*}{$N_8^{8,3}\to N_5^{8,3}$} & $x_1=e_1$, & $x_2=e_2$, & $x_3=e_3$, & $x_4=\frac{1}{t}e_4$,\\
 & $x_5=e_5$, & $x_6=e_6$, & $x_7=e_7$, & $x_8=\frac{1}{t}e_8$. \\ \hline

\multirow{2}{*}{$N_8^{8,3}\to(37D)$} & $x_1=e_1$, & $x_2=e_3$, & $x_3=\frac{1}{t}e_2$, & $x_4=te_5-e_2$, \\
& $x_5=e_7$, & $x_6=\frac{1}{t}e_6$, & $x_7=e_8$, & $x_8=te_4$. \\ \hline

\multirow{2}{*}{$\n_{5,1}\oplus\n_{3,1}\to(37B)$} & $x_1=e_3$, & $x_2=e_5$, & $x_3=e_4+e_7$, & $x_4=e_8$, \\
& $x_5=e_1$, & $x_6=-e_2$, & $x_7=e_6$, & $x_8=te_7$. \\ \hline

\multirow{2}{*}{$\n_{5,1}\oplus\n_{3,1}\to(27A)$} & $x_1=e_5$, & $x_2=-e_3$, & $x_3=e_7$, & $x_4=-e_4$, \\
& $x_5=e_8$, & $x_6=e_1$, & $x_7=e_2$, & $x_8=\frac{1}{t}(e_6-e_2)$. \\ \hline

%

\multirow{2}{*}{$N_1^{8,3}\to N_2^{8,3}$} & $x_1=-te_1$, & $x_2=e_2+e_4$, & $x_3=e_3$, & $x_4=e_1+e_5$,\\
 & $x_5=te_4$, & $x_6=-te_6$, & $x_7=e_7$, & $x_8=-e_8$. \\ \hline
 
\multirow{2}{*}{$N_1^{8,3}\to \n_{5,1}\oplus\n_{3,1}$} & $x_1=e_7$, & $x_2=te_8$, & $x_3=e_2$, & $x_4=te_1$,\\
 & $x_5=e_3$, & $x_6=e_6$, & $x_7=e_4$, & $x_8=e_5$. \\ \hline

%

\multirow{2}{*}{$N_3^{8,4}\to(37D)$} & $x_1=e_1$, & $x_2=e_3$, & $x_3=e_4$, & $x_4=-e_2$, \\
& $x_5=e_6$, & $x_6=e_8$, & $x_7=e_7$, & $x_8=\frac{1}{t}e_5$. \\ \hline

\multirow{2}{*}{$N_3^{8,4}\to N_2^{8,4}$} & $x_1=e_1$, & $x_2=e_2$, & $x_3=e_3$, & $x_4=te_4$,\\
 & $x_5=e_5$, & $x_6=e_6$, & $x_7=e_7$, & $x_8=te_8$. \\ \hline

\multirow{2}{*}{$N_5^{8,2}\to N_2^{8,2}$} & $x_1=\frac{1}{t}e_2$, & $x_2=-te_1$, & $x_3=te_3$, & $x_4=e_5$, \\
& $x_5=e_6$, & $x_6=-e_4$ & $x_7=e_7$, & $x_8=e_8$. \\ \hline

\multirow{2}{*}{$N_5^{8,2}\to\n_{5,3}\oplus\n_{3,1}$} & $x_1=e_8$, & $x_2=e_2$, & $x_3=e_4$, & $x_4=e_3$, \\
& $x_5=e_5$, & $x_6=\frac{1}{t}e_7$, & $x_7=e_1+\frac{1}{t}e_5$, & $x_8=e_6$. \\ \hline

\multirow{2}{*}{$N_5^{8,2}\to N_4^{8,2}$} & $x_1=\frac{1}{t}e_1$, & $x_2=te_2$, & $x_3=e_3$, & $x_4=e_4$, \\
& $x_5=e_5$, & $x_6=e_6$, & $x_7=e_7$, & $x_8=e_8$. \\ \hline

\multirow{2}{*}{$N_7^{8,3}\to N_2^{8,3}$} & $x_1=\frac{1}{t}e_5$, & $x_2=-te_1$, & $x_3=e_3$, & $x_4=e_4$, \\
& $x_5=-e_2$, & $x_6=e_8$, & $x_7=-e_7$, & $x_8=e_6$. \\ \hline

\multirow{2}{*}{$N_7^{8,3}\to N_8^{8,3}$} & $x_1=e_1$, & $x_2=-\frac{1}{t}e_3+e_4$, & $x_3=e_2$, & $x_4=-e_5$, \\
& $x_5=-e_3$, & $x_6=-\frac{1}{t}e_7$, & $x_7=e_6$, & $x_8=-e_8$. \\ \hline




\multirow{2}{*}{$N_1^{8,4}\to N_3^{8,4}$} & $x_1=-e_1+\frac{1}{\sqrt{t}}e_2$, & $x_2=-\frac{1}{\sqrt{t}}e_3+e_4$, & $x_3=e_3+\sqrt{t}e_4$, & $x_4=\sqrt{t}e_1+e_2$, \\
& $x_5=-\frac{1}{t}e_6+e_8$, & $x_6=\frac{1}{\sqrt{t}}e_6+\sqrt{t}e_8$, & $x_7=-2e_7$, & $x_8=-2e_5$. \\ \hline

\multirow{2}{*}{$N_3^{8,2}\to N_2^{8,2}$} & $x_1=e_1+e_5$, & $x_2=te_2$, & $x_3=e_6$, & $x_4=e_2+e_4$, \\
& $x_5=-te_1$, & $x_6=-e_3$, & $x_7=te_7$, & $x_8=e_8$. \\ \hline

\multirow{2}{*}{$N_3^{8,2}\to \n_{5,3}\oplus\n_{3,1}$} & $x_1=e_8$, & $x_2=e_5$, & $x_3=-te_4$, & $x_4=e_6$, \\
& $x_5=\frac{1}{t}e_3$, & $x_6=e_7$, & $x_7=e_1$, & $x_8=e_2$. \\ \hline

\multirow{2}{*}{$N_3^{8,2}\to N_4^{8,2}$} & $x_1=-te_1$, & $x_2=e_2$, & $x_3=e_6$, & $x_4=\frac{1}{t}e_3+e_5$, \\
& $x_5=e_4$, & $x_6=e_3$, & $x_7=-e_8$, & $x_8=-e_7+\frac{1}{t}e_8$. \\ \hline




\multirow{2}{*}{$N_3^{8,3}\to N_2^{8,3}$} & $x_1=-e_3$, & $x_2=e_2$, & $x_3=-e_1+te_3$, & $x_4=e_4$, \\
& $x_5=e_5$, & $x_6=e_7$, & $x_7=e_6$, & $x_8=te_8$. \\ \hline

\multirow{2}{*}{$N_3^{8,3}\to N_8^{8,3}$} & $x_1=e_1+e_4$, & $x_2=\frac{1}{t}e_2+e_3$, & $x_3=te_3$, & $x_4=e_5$, \\
& $x_5=te_1$, & $x_6=\frac{1}{t}e_6-e_8$, & $x_7=-e_6$, & $x_8=e_7$. \\ \hline

\multirow{2}{*}{$N_3^{8,3}\to \n_{5,1}\oplus\n_{3,1}$} & $x_1=e_6$, & $x_2=e_7$, & $x_3=e_1$, & $x_4=-e_3$, \\
& $x_5=e_2$, & $x_6=te_8$, & $x_7=e_4$, & $x_8=t(e_5-e_3)$. \\ \hline





\multirow{2}{*}{$N_1^{8,2}\to N_3^{8,2}$} &  $x_1=e_1$, & $x_2=e_2$,&  $x_3=te_3$, & $x_4=e_4-e_6$, \\
& $x_5=e_5+e_3$, & $x_6=te_6$, & $x_7=e_7$, &$x_8=te_8$. \\ \hline

\multirow{2}{*}{$N_1^{8,2}\to N_5^{8,2}$} &  $x_1=-t(e_3+e_6)$, & $x_2=e_2+e_4+e_5$,&  $x_3=i\sqrt{t}(-e_3+e_6)$, & $x_4=i\sqrt{t}(-e_4+e_5)$, \\
& $x_5=e_1+e_3+e_6$, & $x_6=te_2$, & $x_7=te_7$, &$x_8=i\sqrt{t}(e_7+2e_8)$. \\ \hline

%

\multirow{2}{*}{$N_{6}^{8,3}\to N_{3}^{8,3}$} & $x_1=e_5$, & $x_2=e_4$, & $x_3=-te_1$, & $x_4=te_2$, \\
& $x_5=e_3$, & $x_6=-e_7$, & $x_7=te_8$, & $x_8=-t^2e_6$. \\ \hline

\multirow{2}{*}{$N_{6}^{8,3}\to N_{7}^{8,3}$} & $x_1=e_1+e_4$, & $x_2=te_1$, & $x_3=te_2$, & $x_4=-e_3+e_5$, \\
& $x_5=-te_5$, & $x_6=-te_8$, & $x_7=te_6$, & $x_8=-te_7$. \\ \hline

\multirow{2}{*}{$N_{6}^{8,3}\to N_{1}^{8,3}$} & $x_1=-te_1$, & $x_2=e_3$, & $x_3=e_2$, & $x_4=e_4$, \\
& $x_5=-te_5$, & $x_6=-te_7$, & $x_7=-e_8$, & $x_8=-te_6$. \\ \hline

\multirow{2}{*}{$N_{10}^{8,3}\to N_{3}^{8,3}$} & $x_1=e_5$, & $x_2=e_2+2e_4$, & $x_3=\frac{i}{\sqrt{t}}(-e_1+e_3)$, & $x_4=-\frac{i}{\sqrt{t}}e_2$, \\
& $x_5=e_1+e_3$, & $x_6=-2e_8$, & $x_7=\frac{i}{\sqrt{t}}(e_6-e_7)$, & $x_8=-\frac{1}{t}(e_6+e_7)$. \\ \hline

\multirow{2}{*}{$N_{10}^{8,3}\to N_{7}^{8,3}$} & $x_1=e_2+\frac{1}{2}e_4$, & $x_2=i\sqrt{t}e_3+e_5$, & $x_3=e_1+e_3-\frac{1}{i\sqrt{t}}e_5$, & $x_4=\frac{i\sqrt{t}}{2}e_4$, \\
& $x_5=te_1$, & $x_6=\frac{i\sqrt{t}}{2}e_7+\frac{1}{2}e_8$, & $x_7=-e_6+\frac{1}{2}e_7-\frac{1}{2i\sqrt{t}}e_8$, & $x_8=-\frac{t}{2}e_7-\frac{i\sqrt{t}}{2}e_8$. \\ \hline

\multirow{2}{*}{$N_{11}^{8,3}\to N_{10}^{8,3}$} & $x_1=e_3$, & $x_2=e_4$, & $x_3=e_5$, & $x_4=e_1$, \\
& $x_5=te_2$, & $x_6=e_6$, & $x_7=e_7$, & $x_8=te_6$. \\ \hline

\multirow{2}{*}{$N_{11}^{8,3}\to N_{6}^{8,3}$} & $x_1=e_2+e_3$, & $x_2=e_1+e_4$, & $x_3=i\sqrt{t}(e_2-e_3)$, & $x_4=i\sqrt{t}(e_4-e_1)$, \\
& $x_5=-e_5$, & $x_6=-e_6+e_8$, & $x_7=-2i\sqrt{t}e_7$, & $x_8=i\sqrt{t}(e_6+e_8)$. \\ \hline


\multirow{2}{*}{$N_{4}^{8,3}\to N_{11}^{8,3}$} & $x_1=e_1+e_2+e_4$, & $x_2=2\sqrt{t}(e_2-e_1)$, & $x_3=-e_3+e_5$, & $x_4=-2e_4$, \\
& $x_5=\sqrt{t}(e_3+e_5)$, & $x_6=4\sqrt{t}e_6$, & $x_7=2\sqrt{t}(-e_7+e_8)$, & $x_8=2(e_8+e_7)$. \\ \hline


\multirow{2}{*}{$N_{9}^{8,3}\to N_{4}^{8,3}$} & $x_1=e_4$, & $x_2=e_5$, & $x_3=te_4$, & $x_4=\frac{1}{t}e_5$, \\
& $x_5=te_3$, & $x_6=e_6$, & $x_7=e_8$, & $x_8=e_7$. \\ \hline

\end{longtable}
\end{spacing}
}

\normalsize

Finally, the Hasse diagram of degenerations is given by:

\fontsize{5pt}{5pt}
{
 
\begin{center}

\begin{tikzpicture}[->,>=stealth',shorten >=0.01cm,auto,node distance=1.25cm,
                    thick,main node/.style={rectangle,draw,fill=gray!12,rounded corners=1.5ex,font=\sffamily \bf \bfseries },
                    blue node/.style={rectangle,draw, color=blue,fill=gray!12,rounded corners=1.5ex,font=\sffamily \bf \bfseries },
                    rigid node/.style={rectangle,draw,fill=black!20,rounded corners=1.5ex,font=\sffamily \tiny \bfseries },style={draw,font=\sffamily \scriptsize \bfseries }]
                    
\node (200)   {};

\node (190) [below          of=200]      {};
\node (180) [below         of=190]      {};
\node (170) [below         of=180]      {};
\node (160) [below         of=170]      {};
\node (150) [below         of=160]      {};
\node (140) [below         of=150]      {};
\node (130) [below         of=140]      {};
\node (120) [below         of=130]      {};
\node (110) [below         of=120]      {};
\node (100) [below         of=110]      {};
\node (90) [below         of=100]      {};
\node (80) [below         of=90]      {};
\node (70) [below         of=80]      {};
\node (60) [below         of=70]      {};
\node (50) [below         of=60]      {};
\node (40) [below         of=50]      {};
\node (30) [below         of=40]      {};
\node (20) [below         of=30]      {};
\node (10) [below         of=20]      {};
\node (00) [below         of=10]      {};

\node (201)  [right of =200]                      { };
\node (202)  [right of =201]                      { };
\node (191)  [right of =190]                      { };
\node (192)  [right of =191]                      {};
\node (181)  [right of =180]                      { };
\node (182)  [right of =181]                      {};
\node (171)  [right of =170]                      { };
\node (172)  [right of =171]                      {};
\node (173)  [right of =172]                      { };
\node (174)  [right of =173]                      { };
\node (161)  [right of =160]                      { };
\node (162)  [right of =161]                      {};
\node (163)  [right of =162]                      { };
\node (151)  [right of =150]                      {};
\node (152)  [right of =151]                      {};
\node (153)  [right of =152]                      { };
\node (154)  [right of =153]                      { };
\node (141)  [right of =140]                      {};
\node (142)  [right of =141]                      { };
\node (143)  [right of =142]                      {};
\node (131)  [right of =130]                      {};
\node (132)  [right of =131]                      { };
\node (133)  [right of =132]                      {};
\node (134)  [right of =133]                      {};
\node (135)  [right of =134]                      {};
\node (136)  [right of =135]                      {};
\node (137)  [right of =136]                      {};
\node (121)  [right of =120]                      {};
\node (122)  [right of =121]                      { };
\node (123)  [right of =122]                      {};
\node (124)  [right of =123]                      {};
\node (125)  [right of =124]                      {};
\node (126)  [right of =125]                      {};
\node (127)  [right of =126]                      {};
\node (128)  [right of =127]                      {};
\node (111)  [right of =110]                      {};
\node (112)  [right of =111]                      { };
\node (113)  [right of =112]                      {};
\node (101)  [right of =100]                      {};
\node (102)  [right of =101]                      { };
\node (91)  [right of =90]                      {};
\node (92)  [right of =91]                      { };
\node (81)  [right of =80]                      {};
\node (82)  [right of =81]                      { };
\node (71)  [right of =70]                      {};
\node (72)  [right of =71]                      { };
\node (73)  [right of =72]                      {};
\node (61)  [right of =60]                      {};
\node (62)  [right of =61]                      { };
\node (51)  [right of =50]                      {};
\node (52)  [right of =51]                      { };
\node (41)  [right of =40]                      {};
\node (42)  [right of =41]                      { };
\node (31)  [right of =30]                      {};
\node (32)  [right of =31]                      { };
\node (21)  [right of =20]                      {};
\node (22)  [right of =21]                      { };
\node (11)  [right of =10]                      {};
\node (12)  [right of =11]                      { };

\node [blue node] (839)  [right of =201]                      {$N_9^{8,3}$};

\node [main node] (834)  [right of =191]                      {$N_4^{8,3}$ };

\node [main node] (8311)  [right of =181]                      {$N_{11}^{8,3}$};

\node [blue node] (821)  [left of =170]                      {$N_1^{8,2}$};
\node [main node] (836)  [right of =171]                      {$N_6^{8,3}$ };
\node [main node] (8310)  [right of =174]                      {$N_{10}^{8,3}$};

\node [main node] (823)  [left of =161]                      {$N_3^{8,2}$};
\node [main node] (833)  [right of =163]                      {$N_3^{8,3}$};

\node [main node] (825)  [left of =150]                      {$N_5^{8,2}$};
\node [main node] (837)  [right of =151]                      {$N_7^{8,3}$};
\node [blue node] (841)  [right of =154]                      {$N_1^{8,4}$};

\node [main node] (831)  [left of =141]                      {$N_1^{8,3}$};
\node [main node] (843)  [right of =143]                      {$N_3^{8,4}$};

\node [main node] (822)  [left of =130]                      {$N_2^{8,2}$};
\node [main node] (832)  [right of =130]                      {$N_2^{8,3}$};
\node [main node] (838)  [right of =132]                      {$N_8^{8,3}$};
\node [main node] (n51n31)  [right of =134]                      {$\n_{5,1}\oplus\n_{3,1}$};

\node [main node] (27b)  [left of =120]                      {$(27B)$};
\node [main node] (835)  [right of =120]                      {$N_5^{8,3}$};
\node [main node] (37d)  [right of =122]                      {$(37D)$};
\node [main node] (842)  [right of =124]                      {$N_2^{8,4}$};
\node [main node] (n53n31)  [right of =126]                      {$\n_{5,3}\oplus\n_{3,1}$};

\node [main node] (824)  [left of =111]                      {$N_4^{8,2}$};
\node [main node] (37b)  [right of =113]                      {$(37B)$};

\node [main node] (27a)  [right of =101]                      {$(27A)$};

\node [main node] (37c)  [right of =91]                      {$(37C)$};

\node [main node] (n31n31)  [right of =81]                      {$\n_{3,1}\oplus\n_{3,1}$};

\node [main node] (37a)  [left of =71]                      {$(37A)$};
\node [main node] (n62)  [right of =73]                      {$\n_{6,2}$};

\node [main node] (n61)  [right of =61]                      {$\n_{6,1}$};

\node [main node] (17)  [right of =51]                      {$(17)$};

\node [main node] (n51)  [right of =41]                      {$\n_{5,1}$};

\node [main node] (n53)  [right of =31]                      {$\n_{5,3}$};

\node [main node] (n31)  [right of =21]                      {$\n_{3,1}$};

\node [main node] (c)  [right of =11]                      {$\C^8$};

\path[every node/.style={font=\sffamily\tiny}]

(n31)  edge [bend right=0, color=black] node{}  (c)
(n53)  edge [bend right=0, color=black] node{}  (n31)
(n51)  edge [bend right=70, color=black] node{}  (n31)
(17)  edge [bend right=-70, color=black] node{}  (n53)
(n61)  edge [bend right=70, color=black] node{}  (n51)
(37a)  edge [bend right=60, color=black] node{}  (n51)
(n62)  edge [bend right=-60, color=black] node{}  (n51)
(n62)  edge [bend right=-60, color=black] node{}  (n53)
(n31n31)  edge [bend right=0, color=black] node{}  (n62)
(37c)  edge [bend right=50, color=black] node{}  (37a)
(37c)  edge [bend right=0, color=black] node{}  (n31n31)
(37c)  edge [bend right=60, color=black] node{}  (n61)

(27a)  edge [bend right=70, color=black] node{}  (n31n31)

(824)  edge [bend right=0, color=black] node{}  (27a)
(824)  edge [bend right=20, color=black] node{}  (17)

(37b)  edge [bend right=-10, color=black] node{}  (37c)

(27b)  edge [bend right=40, color=black] node{}  (27a)

(835)  edge [bend right=10, color=black] node{}  (37b)
(835)  edge [bend right=0, color=black] node{}  (27a)

(37d)  edge [bend right=0, color=black] node{}  (37b)

(842)  edge [bend right=0, color=black] node{}  (37b)

(n53n31)  edge [bend right=-10, color=black] node{}  (27a)

(822)  edge [bend right=0, color=black] node{}  (27b)

(832)  edge [bend right=0, color=black] node{}  (835)

(838)  edge [bend right=0, color=black] node{}  (27b)
(838)  edge [bend right=0, color=black] node{}  (835)
(838)  edge [bend right=0, color=black] node{}  (37d)

(n51n31)  edge [bend right=10, color=black] node{}  (37b)
(n51n31)  edge [bend right=0, color=black] node{}  (27a)

(831)  edge [bend right=0, color=black] node{}  (832)
(831)  edge [bend right=0, color=black] node{}  (n51n31)

(843)  edge [bend right=0, color=black] node{}  (37d)
(843)  edge [bend right=30, color=black] node{}  (842)

(825)  edge [bend right=0, color=black] node{}  (822)
(825)  edge [bend right=4, color=black] node{}  (n53n31)
(825)  edge [bend right=0, color=black] node{}  (824)

(837)  edge [bend right=0, color=black] node{}  (832)
(837)  edge [bend right=0, color=black] node{}  (838)

(841)  edge [bend right=0, color=black] node{}  (843)

(823)  edge [bend right=0, color=black] node{}  (822)
(823)  edge [bend right=15, color=black] node{}  (824)
(823)  edge [bend right=15, color=black] node{}  (n53n31)

(833)  edge [bend right=0, color=black] node{}  (832)
(833)  edge [bend right=10, color=black] node{}  (838)
(833)  edge [bend right=-5, color=black] node{}  (n51n31)

(821)  edge [bend right=0, color=black] node{}  (823)
(821)  edge [bend right=0, color=black] node{}  (825)

(836)  edge [bend right=0, color=black] node{}  (837)
(836)  edge [bend right=0, color=black] node{}  (833)
(836)  edge [bend right=0, color=black] node{}  (831)

(8310)  edge [bend right=30, color=black] node{}  (837)
(8310)  edge [bend right=0, color=black] node{}  (833)

(8311)  edge [bend right=0, color=black] node{}  (8310)
(8311)  edge [bend right=0, color=black] node{}  (836)

(834)  edge [bend right=0, color=black] node{}  (8311)

(839)  edge [bend right=0, color=black] node{}  (834);

\end{tikzpicture}

\end{center}

}

\normalsize

With all this, we obtain:

\begin{theorem}
The irreducible components of the variety $\mathcal{N}_8^2$ are:
\begin{itemize}
\item $C_1=\overline{O(N_1^{8,2})}$,
\item $C_2=\overline{O(N_9^{8,3})}$,
\item $C_3=\overline{O(N_1^{8,4})}$.
\end{itemize}
Moreover, the Lie algebras $N_1^{8,2}$, $N_9^{8,3}$ and $N_1^{8,4}$ are rigid in $\mathcal{N}_8^2$.
\end{theorem}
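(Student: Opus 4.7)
The plan is to deduce both claims directly from the degeneration data compiled in Sections 2 and 3. Since Theorem 1.2 provides a finite and exhaustive list of isomorphism classes in $\mathcal{N}_8^2$, the variety decomposes as a finite union $\mathcal{N}_8^2=\bigcup_{\g}O(\g)$ over the algebras $\g$ listed in Table 1. Each orbit $O(\g)$ is the image of the irreducible group $\GL(8,\C)$ under the orbit map, so its Zariski closure $\overline{O(\g)}$ is irreducible, and $\mathcal{N}_8^2=\bigcup_{\g}\overline{O(\g)}$. Consequently, the irreducible components of $\mathcal{N}_8^2$ are exactly the maximal elements, under inclusion, of this finite family of orbit closures, and for non-isomorphic $\g,\h$ the inclusion $\overline{O(\h)}\subseteq\overline{O(\g)}$ is equivalent to the degeneration relation $\g\to\h$.

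With this reduction, I would inspect the Hasse diagram constructed above and identify the top algebras, i.e., those $\g$ for which no non-isomorphic $\h$ in Table 1 satisfies $\h\to\g$. Reading off the diagram, exactly three algebras have no incoming arrow, namely $N_1^{8,2}$, $N_9^{8,3}$, and $N_1^{8,4}$, so their orbit closures $C_1$, $C_2$, $C_3$ are the candidate irreducible components. Two combinatorial checks finish the argument. First, these three closures are pairwise incomparable, because the Hasse diagram contains no directed path between any two of $N_1^{8,2}$, $N_9^{8,3}$, $N_1^{8,4}$ (each of the three is itself a top), and hence $C_1,C_2,C_3$ are three distinct maximal closures. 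Second, every other algebra in Table 1 lies in the closure of at least one of them, which follows by tracing a downward path from one of the three tops and using the transitivity of $\to$; this yields $C_1\cup C_2\cup C_3=\mathcal{N}_8^2$.

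Finally, for the rigidity claim, I would invoke the same finite-union decomposition: $\mathcal{N}_8^2\setminus O(\g)=\bigcup_{\h\not\cong\g}O(\h)$, and this complement is closed precisely when each $\overline{O(\h)}$ with $\h\not\cong\g$ avoids $O(\g)$, i.e., when no other algebra degenerates to $\g$. Since the Hasse diagram exhibits no incoming arrow at any of $N_1^{8,2}$, $N_9^{8,3}$, $N_1^{8,4}$, their orbits are open in $\mathcal{N}_8^2$, which is the definition of rigidity in this variety. The substantive work, namely verifying the Hasse diagram itself through the invariants of Lemma \ref{lemma:rel}, the Borel-stable sets $S_1,\dots,S_4$, the cocycle criterion behind Lemma \ref{lem:ultdeg}, and the explicit one-parameter families in Table \ref{Table2}, has already been carried out, so the present theorem is effectively a combinatorial corollary. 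The main obstacle is not in the proof of this statement but in the completeness of the non-degeneration table: a single missing non-degeneration from or to one of the three algebras would either merge two of the components or introduce a fourth, so confirming exhaustiveness of the non-degeneration list in Table \ref{Table1} and of the primary degenerations in Table \ref{Table2} is the point that must be guaranteed for the conclusion to stand.
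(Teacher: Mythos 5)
Your proposal is correct and is essentially the argument the paper intends (the paper states the theorem with only the remark ``With all this, we obtain,'' leaving implicit exactly the reduction you spell out): the variety is a finite union of the irreducible orbit closures from Table 1, the irreducible components are the maximal ones under the degeneration order, and the three algebras with no incoming arrow in the Hasse diagram --- whose pairwise incomparability is guaranteed by the non-degeneration table together with the orbit-dimension inequality --- are precisely these maxima, with rigidity following since no other orbit closure meets their orbits. Your closing observation that the whole weight of the theorem rests on the completeness of Tables 1 and 2 is also the correct assessment of where the real content lies.
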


%

\bigskip

%
%
%
%
%
%
%
%
%
%

{\bf Acknowledgements.}
This work was started during a research stay at FaMAF - Universidad Nacional de C\'ordoba, supported by MINEDUC-UA project - code ANT 1855. The author is also supported by ``Fondo Puente de Investigaci\'on de Excelencia'' FPI-18-02, Universidad de Antofagasta.

\end{document}